\newtheorem{theorem}{Theorem}[section]
\newtheorem{lemma}[theorem]{Lemma}
\numberwithin{equation}{section}
\def\Q{{\mathbb {Q}}}
\def\N{{\bf N}} 
\def\Z{{\mathbb Z}}
\def\house#1{\setbox1=\hbox{$\,#1\,$}%
\dimen1=\ht1 \advance\dimen1 by 2pt \dimen2=\dp1 \advance\dimen2 by 2pt
\setbox1=\hbox{\vrule height\dimen1 depth\dimen2\box1\vrule}%
\setbox1=\vbox{\hrule\box1}%
\advance\dimen1 by .4pt \ht1=\dimen1
\advance\dimen2 by .4pt \dp1=\dimen2 \box1\relax}
  \def\eps{{\varepsilon}}
\def\build#1_#2^#3{\mathrel{\mathop{\kern 0pt#1}\limits_{#2}^{#3}}}
\def\date {le\ {\the\day}\ \ifcase\month\or
janvier\or fevrier\or mars\or avril\or mai\or juin\or juillet\or
ao\^ut\or septembre\or octobre\or novembre\or
d\'ecembre\fi\ {\oldstyle\the\year}}
\font\fivegoth=eufm5 \font\sevengoth=eufm7 \font\tengoth=eufm10
\def \N{\mathbb{N}}
\def \Q{\mathbb{Q}}
\def \Z{\mathbb{Z}}
\begin{document}

\title{On inhomogeneous extension of Thue-Roth's type inequality with moving targets}
\author{Veekesh Kumar}

\address{Institute of Mathematical Sciences, HBNI, C.I.T Campus, Taramani, Chennai 600 113, India}
\email[Veekesh Kumar]{veekeshiitg@gmail.com}

\subjclass[2010] {Primary 11J68; Secondary 11J87 }
\keywords{Approximation to algebraic numbers, Pisot number, Schmidt Subspace Theorem .}
%\bigskip
\begin{abstract} 
Let $\Gamma\subset \overline{\mathbb Q}^{\times}$ be a finitely generated multiplicative group of algebraic numbers.  Let $\delta, \beta\in\overline{\mathbb Q}^\times$  be  algebraic numbers with $\beta$ irrational.  In this paper,  we prove that    there exist only finitely many triples $(u, q,  p)\in\Gamma\times\mathbb{Z}^2$ 
with $d = [\mathbb{Q}(u):\mathbb{Q}]$  such that $|\delta qu|>1$ and 
$$
0<|\delta qu+\beta-p|<\frac{1}{H(u)^\varepsilon q^{d+\varepsilon}},
$$  
where $H(u)$ denotes  the absolute Weil height. This is an inhomogeneous analogue of the main theorem in  \cite{corv}.  As an application of our result, we also prove a transcendence result, which states as follows:  Let $\alpha>1$ be a real number. Let $\beta$ be an algebraic irrational number and  $\lambda$ be a non-zero real algebraic number.   For a given real number $\varepsilon >0$, if there are infinitely many natural numbers $n$ for which  $||\lambda\alpha^n+\beta|| < 2^{- \varepsilon n}$ holds true, then  $\alpha$ is transcendental, where $||x||$ denotes the distance from its nearest integer.
When $\alpha$ and $\beta$ both are algebraic numbers satisfying same conditions, then a particular result of Kulkarni, Mavraki and Nguyen in \cite{kul} asserts that $\alpha^d$ is a Pisot number. When $\beta $ is an algebraic irrational, our result implies that no algebraic number $\alpha$ satisfies the inequality for infinitely many natural numbers $n$. Also, our result  strengthens a result of Wagner and Ziegler \cite{wagner}.   
%The proof of our results uses the Subspace Theorem based on  the idea of  Corvaja and Zannier \cite{corv} together with various  modification  play a crucial role in the proof.
 \end{abstract}
\maketitle

\section{introduction}
For a real number $x$, let $||x||$  denote the distance of $x$ to its nearest integer, given by 
$$
||x||:=\mbox{min}\{|x-m|:m\in\mathbb{Z}\}.
$$

It is interesting to understand the behaviour of $||\alpha^n||$  for a given  real number $\alpha$ greater than 1.  In this context, in 1957 Mahler \cite{mahler} showed that for   $\alpha\in \mathbb{Q}\backslash\Z$ with $\alpha>1$ and $\varepsilon$  a positive real number, there are only finitely many $n\in \N$ satisfying  $||\alpha^n||<2^{-\varepsilon n}$. The key ingredient in Mahler's proof was the $p$-adic extension of Roth's theorem established by Ridout \cite{ridd1}. Mahler also asked for which algebraic number $\alpha$ the above conclusion holds true.
\smallskip

In 2004, by ingenious applications of the Subspace Theorem, Corvaja and Zannier \cite{corv} proved a `Thue-Roth' type inequality with `moving targets'. As an application of this result,  they answered  the  question of Mahler and  proved the following: {\it let $\alpha>1$ be a real algebraic number and let $\varepsilon$ be a positive real number. Suppose that $|| \alpha^n ||< 2^{- \varepsilon n}$  for infinitely many $n$. 
Then, there is some integer $d\geq 1$ such that the number $\alpha^d$ is a Pisot number. In particular $\alpha$ is an algebraic integer. }  We recall that a  real algebraic integer $\alpha>1$  is called a Pisot number, if the modulus value of all its   Galois conjugates other than $\alpha$ lie inside  the open unit disc.
\bigskip

In this paper, the main aim is to prove an inhomogeneous extension of Thue-Roth's type inequality with moving targets in the same spirit as the result of Corvaja and Zannier in \cite{corv}. We prove the following. 

\begin{theorem}\label{maintheorem}
Let $\Gamma\subset \overline{\mathbb Q}^{\times}$ be a finitely generated multiplicative group of algebraic numbers.  Let $\delta$ be a non-zero algebraic number, $\beta\in(0,1)$ be an algebraic  irrational, and $\varepsilon>0$ be a fixed  real number.  Then there exist only finitely many triples $(u, q,  p)\in\Gamma\times\mathbb{Z}^2$ with $d=[\mathbb{Q}(u):\mathbb{Q}]$   such that  $|\delta q u|>1$  and  
\begin{equation*}\label{eq1.1}
\tag{1.1}
0<|\delta qu+\beta-p|<\frac{1}{H(u)^\varepsilon q^{d+\varepsilon}}.
\end{equation*}
\end{theorem}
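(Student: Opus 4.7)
The plan is to argue by contradiction via the Schmidt Subspace Theorem, adapting the Corvaja--Zannier scheme \cite{corv} to accommodate both the variation of $u$ over a multiplicative group and the fixed inhomogeneous term $\beta$.

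\textbf{Setup.} First, I would replace $\Gamma$ by the multiplicative group generated by all Galois conjugates of its finitely many generators; this remains finitely generated and is Galois-stable. Fix a Galois number field $K \supset \mathbb{Q}(\delta,\beta,\Gamma)$ and a finite set $S$ of places of $K$ containing all archimedean places together with every finite place where any generator of $\Gamma$ is not a unit, so that every $u\in\Gamma$ is an $S$-unit of $K$. Since $d=[\mathbb{Q}(u):\mathbb{Q}]$ divides $[K:\mathbb{Q}]$, after extracting an infinite subfamily of the hypothetical solutions I may assume $d$ is constant. Let $v_0$ denote the archimedean place of $K$ along which the inequality (1.1) is measured.

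\textbf{Subspace Theorem application.} To each triple $(u,q,p)$ in the subfamily I attach the point
$\mathbf{x}_u = (p,\,q,\,qu^{(1)},\ldots,qu^{(d)}) \in K^{d+2}$,
where $u^{(1)}=u,\,u^{(2)},\ldots,u^{(d)}$ are the Galois conjugates of $u$. At $v_0$ I take the $d+2$ linearly independent linear forms
\[L_0(\mathbf{X}) = X_0 - \beta X_1 - \delta X_2,\qquad X_1,\ X_2,\ \ldots,\ X_{d+1},\]
so by (1.1) the first of these satisfies $|L_0(\mathbf{x}_u)|_{v_0}<H(u)^{-\varepsilon}q^{-d-\varepsilon}$. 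At every other place $v\in S$ I use the $d+2$ coordinate forms $X_0,\ldots,X_{d+1}$. The product $\prod_{v\in S}\prod_{i=0}^{d+1}|L_{v,i}(\mathbf{x}_u)|_v/\lVert\mathbf{x}_u\rVert_v$ collapses as follows: the coordinate contributions at places $v\neq v_0$ are each $\le 1$, the product formula applied to $p,q$ and each $u^{(j)}$ eliminates the non-archimedean contributions (since $u^{(j)}\in\Gamma$ is an $S$-unit), and the condition $|\delta qu|>1$ forces $|p|_{v_0}\asymp|qu|$, so $\lVert\mathbf{x}_u\rVert_{v_0}$ is polynomially comparable to $qH(u)^c$. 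Only the single small factor $|L_0|_{v_0}/\lVert\mathbf{x}_u\rVert_{v_0}$ survives, giving a bound of the form $H(\mathbf{x}_u)^{-\eta}$ for some $\eta>0$. The Subspace Theorem then confines the $\mathbf{x}_u$ to finitely many proper hyperplanes $c_0X_0+c_1X_1+\sum_{j=1}^{d}c_{j+1}X_{j+1}=0$.

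\textbf{Case analysis.} Fix a hyperplane carrying infinitely many $\mathbf{x}_u$. If $c_0=0$, the relation reduces to $c_1+\sum_{j=1}^{d}c_{j+1}u^{(j)}=0$, a non-trivial linear equation in the $S$-units $u^{(j)}\in\Gamma$; iterating the Evertse--Schlickewei--van der Poorten theorem on vanishing subsums leaves only finitely many $u$. If $c_0\neq 0$, then $p=-q\mu_u$ with $\mu_u=(c_1+\sum_{j=1}^{d}c_{j+1}u^{(j)})/c_0$, and substituting back into (1.1) yields
\[|q(\delta u+\mu_u)+\beta|<H(u)^{-\varepsilon}q^{-d-\varepsilon}.\]
If $\delta u+\mu_u=0$ for infinitely many $u$, the Evertse--Schlickewei theorem again truncates the family, and the surviving inequality would force $|\beta|$ arbitrarily small, contradicting $\beta\neq 0$. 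Otherwise $\delta u+\mu_u$ is a nonzero algebraic number of bounded degree, and the displayed inequality expresses that the algebraic irrational $\beta$ is too-well approximated by a linear-in-$q$ multiple of this number; a secondary Diophantine argument (Roth's theorem applied to $\beta$, or a second, inhomogeneous, invocation of the Subspace Theorem) that essentially uses the irrationality of $\beta$ delivers the final contradiction.

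\textbf{Main obstacle.} The technical heart of the argument is the linear-form product calculation: organizing the $S$-unit relations among the $d$ conjugates so that the factor $q^{d+1}$ from the coordinate forms $q,qu^{(1)},\ldots,qu^{(d)}$ is absorbed and the gain $H(u)^{-\varepsilon}$ remains visible against $H(\mathbf{x}_u)^{\eta}$. The case $c_0\neq0$ with $\delta u+\mu_u\neq 0$ is equally delicate, for the presence of the irrational constant $\beta$ replaces the homogeneous argument of \cite{corv} with a genuine inhomogeneous approximation problem in which the rationality or irrationality of the "absorbed" algebraic number $\delta u+\mu_u$ must be exploited through a further Diophantine step.
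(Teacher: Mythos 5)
Your overall strategy (contradiction via the Subspace Theorem, following the Corvaja--Zannier scheme) is the right one, but the central Subspace Theorem application is set up in a way that does not yield the required inequality, and this is a genuine gap rather than a cosmetic one. You place the single small form $L_0$ only at the distinguished place $v_0$ and use coordinate forms at every other place of $S$. With the normalizations in force, $|L_0(\mathbf{x}_u)|_{v_0}$ is only the $d(\rho_{v_0})/[K:\mathbb{Q}]$-th power of the small quantity, i.e.\ at best $\bigl(H(u)^{\varepsilon}q^{d+\varepsilon}\bigr)^{-d(\rho_{v_0})/[K:\mathbb{Q}]}$, whereas the numerator of your double product still contains, from the remaining archimedean places and the coordinate forms $X_0,\dots,X_{d+1}$, the full factors $|p|^{1-d(\rho_{v_0})/[K:\mathbb{Q}]}$ and up to $|q|^{d+1}$ (the product formula kills only the $S$-unit factors $u^{(j)}$, not $p$ and $q$, which are merely rational integers). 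Since $|p|\asymp|q||u|$, these losses are polynomially large in $H(\mathbf{x}_u)$ and overwhelm the fractional-power gain, so the product is nowhere near $H(\mathbf{x}_u)^{-(d+2)-\eta}$. The indispensable device --- used in the paper's Lemma 3.1 and in Corvaja--Zannier --- is to take a \emph{different} small form at \emph{each} archimedean place $v$: if $\rho_v$ restricted to $k=\mathbb{Q}(u)$ equals $\sigma_j$, one takes $L_{v,0}=\rho_v(\beta)x_0-x_1+\rho_v(\delta)x_{j+1}$ evaluated at $(1,p,q\sigma_1(u),\dots,q\sigma_d(u))$; then $\prod_{v\in M_\infty}|L_{v,0}(\mathbf{X})|_v=|\delta qu+\beta-p|$ recovers the full small quantity (equation (3.4)) and simultaneously removes the coordinate $p$ from the archimedean numerator, leaving only $|q|^d$ to absorb. (Separately, your form $X_0-\beta X_1-\delta X_2$ evaluated at $(p,q,qu,\dots)$ gives $p-\beta q-\delta qu$, not $p-\beta-\delta qu$; the second coordinate must be $1$, not $q$.)

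The endgame is also substantially underestimated. In your case $c_0=0$, a nontrivial relation $c_1+\sum_j c_{j+1}u^{(j)}=0$ among $S$-units does \emph{not} leave finitely many $u$: the vanishing-subsum theorem only reduces you to a shorter nontrivial relation $\sum_j c_{j+1}u^{(j)}=0$ holding for infinitely many $u$, and the actual mechanism of the proof is that such a relation forces $u/\delta'$ into a proper subfield $k'$ of $k$ for some fixed $\delta'$ (the conclusion of Lemma 3.1, following Lemma 3 of Corvaja--Zannier), after which one must run an induction along a strictly decreasing chain of subfields of $K$ to reach the contradiction. Likewise, in the case $c_0\neq0$ the quantity $\delta u+\mu_u$ is a moving target, so Roth's theorem does not apply; the paper handles this by a further Subspace-type step (its Lemma 2.1 together with Claims 1 and 2 and the two cases of Lemma 3.1), which crucially exploits the irrationality of $\beta$ through an automorphism $\rho_0$ with $\rho_0(\beta)\neq\beta$ to eliminate the inhomogeneous term. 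These are not optional refinements but the core of the argument, and the proposal as written does not supply them.
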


Recently in 2019, Kulkarni, Mavraki and Nguyen \cite{kul} generalized  Mahler's problem to an arbitrary linear recurrence sequence of the form $\{Q_1(n)\alpha_1^n+\cdots+Q_k(n)\alpha^n_k:n\in\N\}$, where $\alpha_i$'s are non-zero algebraic numbers and $Q_i(x)\in\overline{\mathbb{Q}}[x]\backslash \{0\}$. In a  particular case, they proved the  following inhomogeneous extension of the problem of Mahler: {\it let $\alpha>1$  be a real number, $\beta$ be a real algebraic number and let 
$\varepsilon$ be a positive real number.  
Supose that $||\alpha^n+\beta|| < 2^{- \varepsilon n}$  for infinitely many $n$. 
Then either $\alpha$ is transcendental or there is an integer $d\geq 1$ such that  $\alpha^d$ is a Pisot number. }
%In particular $\alpha$ is an algebraic integer. We recall that a real algebraic number $\alpha > 1$ is called {\it a Pisot number}, if  the modulus value of all the other conjugates of $\alpha$ is $< 1$. }
\bigskip

%\noindent{\bf Theorem 1.2} (Kulkarni-Mavraki-Nguyen).
%Let $\alpha>1$  and $\beta, \lambda\neq 0$ be real algebraic numbers and let 
%$\varepsilon$ be a positive real number.  
%Supose that $||\lambda\alpha^n+\beta|| < 2^{- \varepsilon n}$  for infinitely many $n$. 
%Then there is some integer $d\geq 1$ such that the number $\alpha^d$ is a Pisot number. 
%In particular $\alpha$ is an algebraic integer. We recall that a real algebraic number $\alpha > 1$ is called {\it a Pisot number}, if  the modulus value of all the other conjugates of $\alpha$ is $< 1$.  
%
%\bigskip

In the above result, if $\beta$ is an integer and $\alpha$ is an algebraic number such that  $\alpha^d$ is a Pisot number, then clearly there are infinitely many natural numbers  $n$ satisfying $||\alpha^{dn}+\beta||<2^{-\eps n}$ for some $\eps>0$. Thus, we can conclude that   the above assertion is best possible,  if $\beta$ is an integer.  
However, if $\beta$ is an algebraic irrational, as an application of Theorem \ref{maintheorem}, we deduce the following surprising result. 

\begin{theorem}\label{maintheorem2}
Let $\alpha>1$ be a real number. Let $\beta$ be an algebraic irrational and  $\lambda$ be a non-zero real algebraic number.   For a given real number $\varepsilon >0$, if there are infinitely many natural numbers $n$ for which  $||\lambda\alpha^n+\beta|| < 2^{- \varepsilon n}$ holds true, then  $\alpha$ is transcendental.
\end{theorem}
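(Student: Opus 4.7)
Assume for contradiction that $\alpha$ is algebraic. Since $\alpha>1$ is real it is not a root of unity, so Kronecker's theorem gives $H(\alpha)>1$. The first reduction is to replace $\beta$ by its fractional part $\beta_0:=\beta-\lfloor\beta\rfloor\in(0,1)$ (still algebraic irrational) and absorb $\lfloor\beta\rfloor$ into the nearest integer, producing an infinite set $S\subset\N$ together with integers $p_n$ ($n\in S$) satisfying
$$
|\lambda\alpha^n+\beta_0-p_n|<2^{-\varepsilon n}.
$$
The plan is to feed these data into Theorem~\ref{maintheorem} applied to the cyclic multiplicative group $\Gamma:=\langle\alpha\rangle\subset\overline{\Q}^{\times}$, with $\delta=\lambda$, irrational target $\beta_0$, and the specialisation $u=\alpha^n$, $q=1$, $p=p_n$.

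The central matching relies on the identity $H(\alpha^n)=H(\alpha)^n$. Choosing $\varepsilon':=\varepsilon/\log_2 H(\alpha)>0$ converts the hypothesis into
$$
0<|\lambda\alpha^n+\beta_0-p_n|<H(\alpha)^{-n\varepsilon'}=\frac{1}{H^{\varepsilon'}(\alpha^n)\cdot 1^{d+\varepsilon'}},
$$
which is exactly inequality (1.1) of Theorem~\ref{maintheorem} with parameter $\varepsilon'$ (the value of $d=[\Q(\alpha^n):\Q]$ is irrelevant because $q=1$). The side condition $|\delta qu|=|\lambda|\alpha^n>1$ is automatic for all sufficiently large $n$, and the triples $(\alpha^n,1,p_n)$ are pairwise distinct because $n\mapsto\alpha^n$ is injective on $\N$ for $\alpha>1$. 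Thus Theorem~\ref{maintheorem} would allow only finitely many such triples, contradicting $|S|=\infty$.

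A technicality requiring separate treatment is the strict inequality $0<|\lambda\alpha^n+\beta_0-p_n|$. If instead $\lambda\alpha^n+\beta_0=p_n\in\Z$ for infinitely many $n$, then applying any $\sigma\in\mathrm{Gal}(\overline{\Q}/\Q)$ to this identity and subtracting from the original yields
$$
\sigma(\lambda)\sigma(\alpha)^n-\lambda\alpha^n=\beta_0-\sigma(\beta_0)
$$
for infinitely many $n$, with right-hand side a constant independent of $n$. A short asymptotic analysis in $n$ using $\alpha>1$ and $\lambda\neq 0$, separating the cases $|\sigma(\alpha)|>\alpha$, $|\sigma(\alpha)|<\alpha$, and $|\sigma(\alpha)|=\alpha$ with $\sigma(\alpha)\neq\alpha$, shows that the left-hand side cannot be a non-zero constant along an infinite subsequence, and forces $\sigma(\alpha)=\alpha$, $\sigma(\lambda)=\lambda$, and hence $\sigma(\beta_0)=\beta_0$. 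Running this over all $\sigma$ gives $\beta_0\in\Q$, contradicting irrationality; so an infinite subset of $S$ satisfies the strict inequality and Theorem~\ref{maintheorem} applies as above.

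The principal obstacle is the scale-matching in the second paragraph: the exponential hypothesis $2^{-\varepsilon n}$ must conform to the shape $H(u)^{-\varepsilon'}q^{-d-\varepsilon'}$ in Theorem~\ref{maintheorem}. Setting $q=1$ neutralises the $q$-factor, and the identity $H(\alpha^n)=H(\alpha)^n$ together with $H(\alpha)>1$ (which relies on $\alpha$ being algebraic but not a root of unity, itself guaranteed by $\alpha>1$) precisely aligns $2^{-\varepsilon n}$ with $H(\alpha^n)^{-\varepsilon'}$ for a positive $\varepsilon'$. This is the structural reason the reduction succeeds; without the hypothesis $\alpha>1$, or if $\alpha$ were a root of unity, the relevant $\varepsilon'$ could not be taken strictly positive and the reduction would collapse.
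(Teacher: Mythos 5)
Your proposal is correct and follows essentially the same route as the paper: assume $\alpha$ algebraic, convert $2^{-\varepsilon n}$ into $H(\alpha^n)^{-\varepsilon'}$ via $H(\alpha^n)=H(\alpha)^n$ and $H(\alpha)>1$, and invoke Theorem \ref{maintheorem} with $\Gamma=\langle\alpha\rangle$, $u=\alpha^n$, $q=1$, $\delta=\lambda$. You additionally patch two small points the paper leaves implicit --- reducing $\beta$ to its fractional part so that the target lies in $(0,1)$ as Theorem \ref{maintheorem} requires, and ruling out $\lambda\alpha^n+\beta\in\mathbb{Z}$ so that the left-hand inequality in \eqref{eq1.1} is strict --- both of which are legitimate refinements of the same argument.
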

\smallskip

%\noindent{\bf Remark.} Let $\alpha$ be  as in Theorem \ref{maintheorem2}. The result of Kulkarni, Mavraki and Nguyen does not exclude  that   $||\alpha^{n}+\beta||<2^{-\eps n}$  for infinitely many values of $n$ and $\alpha$ is an algebraic number. 
%But by Theorem \ref{maintheorem2},  if the above condition holds then $\alpha$ has to be transcendental. 
\smallskip

Note that Theorem \ref{maintheorem2} strengthens   the main result of Wagner and Ziegler \cite[Theorem 2]{wagner}.  
\bigskip

\section{Preliminaries} 

Let $K$ be a number field which is a Galois extension over $\mathbb{Q}$. Let $M_K$ be the set of all places on $K$ and $M_\infty$ be the set of all archimedean places on $K$. 
 For each place $w\in M_K$, let $K_w$ denote the completion of the number field $K$ with respect to $w$ and $d(w)=[K_w:\mathbb{Q}_\mathit{v}]$, where $\mathit{v}$ is the restriction of $w$ to $\mathbb{Q}$. 
For  every $w\in M_K$ whose restriction on $\mathbb{Q}$ is $v$ and $\alpha\in K$, we define the  normalized  absolute value $|\cdot|_w$ as follows:
\begin{equation*}\label{eq2.1}
\tag{2.1}
|\alpha|_w:=|\mbox{Norm}_{K_w/\mathbb{Q}_v}(\alpha)|^{\frac{1}{[K:\mathbb{Q}]}}_v.
\end{equation*}
Indeed if $w\in M_\infty$, then there exists an automorphism $\sigma\in\mbox{Gal}(K/\mathbb{Q})$ of $K$ such that for all $x\in K$, 
$$
|x|_w=|\sigma(x)|^{d(K)/[K:\mathbb{Q}]},
$$
where $d(K) =1$ if $K\subset \mathbb{R}$ and $d(K) = 2$  otherwise.  
\smallskip

%For each automorphism $\rho\in \mbox{Gal}(K/\mathbb{Q}),$ one defines an archimedean valuation on $K$ by the formula 
%\begin{equation*}\label{eq2.2}
%\tag{2.2}
%|x|_\rho:= |\rho^{-1}(x)|^{d(K)/[K:\mathbb{Q}]},
%\end{equation*}
%where $|\cdot |$ denotes the complex absolute value. 
 
\smallskip

Thus under the definition \eqref{eq2.1}, the    product formula   $\displaystyle\prod_{w\in M_K}|x|_w=1$ holds for any  $x\in K^\times$ and the 
absolute Weil  height  $H(x)$ is defined as 
$$
H(x):=\prod_{w\in M_K}\mbox{max}\{1,|x|_w\}.
$$
One can see that this height is independent of the choice of the number field $K$ containing $x$.
\bigskip

For a vector $\mathbf{x}=(x_1,\ldots,x_n)\in K^n$  and for a place $w\in M_K$, the $w$-norm for  $\mathbf{x}$ denoted by $||\mathbf{x}||_w$ is given by 
$$
||\mathbf{x}||_w:=\mbox{max}\{|x_1|_w,\ldots,|x_n|_w\}
$$
and  the projective height,  $H(\mathbf{x})$, is defined by 
$$
H(\mathbf{x}):=\prod_{w\in M_K}||\mathbf{x}||_w.
$$

For a finite set $S\subset M_K$ of places on $K$ which contains $M_\infty$,  the ring of $S$-integers, denoted by $\mathcal{O}_S$, is defined as 
$$
\mathcal{O}_S:=\mathcal{O}_{K,S}=\{\alpha\in K: |\alpha|_v\leq 1~~~~\mbox{for all}~~~v\notin S\}.
$$ 
The group of $S$-units in $K$, denoted by $\mathcal{O}^\times_S$ is the set of all  invertible elements of $\mathcal{O}_S$, defined as 
$$
\mathcal{O}^\times_S:=\{\alpha\in K:|\alpha|_v=1 ~~~\mbox{for all}~~~ v\notin S\}.
$$
\smallskip

Now we are ready to present a more general version  of the Schmidt Subspace Theorem, which was  formulated by Schlickewei and Evertse.  For the reference,  see (\cite[ Chapter 7]{bomb},  \cite[ Chapter V, Theorem 1D$^\prime$]{schmidt} and  \cite[Page 16, Theorem II.2]{zannier}).
\smallskip

\begin{theorem} (Schlickewei) \label{schli}
 Let $K$ be an algebraic number field and $m \geq 2$ an integer. Let $S$ be a finite set of places on $K$ containing all the archimedean  places.  For each $v \in S$, let $L_{1,v}, \ldots, L_{m,v}$ be linearly independent linear  forms in the variables $X_1,\ldots,X_m$ with  coefficients in $K$.  For any $\varepsilon>0$, the set of solutions $\mathbf{x} \in K^m\backslash\{0\}$ to the inequality 
\begin{equation*}
\prod_{v\in S}\prod_{i=1}^{m} \frac{|L_{i,v}(\mathbf{x})|_v}{\|\mathbf{x}\|_v} \leq \frac{1}{H(\mathbf{x})^{m+\varepsilon}}   %%y
\end{equation*}
is contained  in finitely many proper subspaces of $K^m$.
\end{theorem}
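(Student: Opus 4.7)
The statement is the Schmidt--Schlickewei Subspace Theorem, a $K$-valued, $S$-adic generalization of Roth's theorem; I would follow the classical pattern of Schmidt's 1972 proof, extended by Schlickewei to the non-archimedean places, with the adelic formulation of Bombieri--Vaaler and Evertse providing a cleaner presentation. The argument is by contradiction: assume there exist infinitely many solutions $\mathbf{x}^{(1)}, \mathbf{x}^{(2)}, \ldots \in K^m \setminus \{0\}$ to the displayed inequality such that no finite union of proper subspaces of $K^m$ contains them all. By standard pigeonhole reductions one may pass to a subsequence along which the heights $H(\mathbf{x}^{(j)})$ grow extremely rapidly and along which the places $v\in S$ where each $|L_{i,v}(\mathbf{x}^{(j)})|_v/\|\mathbf{x}^{(j)}\|_v$ is "small" stabilise.

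The first main step is to apply the adelic geometry of numbers --- Minkowski's second theorem for $S$-parallelepipeds in the style of McFeat and Bombieri--Vaaler --- to the twisted $S$-adic heights attached to the linear forms $L_{i,v}$. After another subsequence extraction one may assume that the $\mathbf{x}^{(j)}$ are of uniform \emph{type}: their successive minima with respect to the filtration cut out by the $L_{i,v}$ lie in a bounded region, and, up to multiplying by $S$-units, they can be written in a common normalized basis of $K^m$.

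The heart of the argument is then the construction of an auxiliary multi-homogeneous polynomial $P(\mathbf{X}_1, \ldots, \mathbf{X}_n)$ in $n$ blocks of $m$ variables, for a large integer $n$ depending on $m$ and $\varepsilon$. Using the absolute Siegel lemma of Bombieri--Vaaler one produces such a $P$ of prescribed multi-degree $(d_1, \ldots, d_n)$, with the ratios $d_i/d_{i+1}$ tending rapidly to infinity, of controlled logarithmic height, and vanishing to a prescribed index at a chosen tuple $(\mathbf{x}^{(j_1)}, \ldots, \mathbf{x}^{(j_n)})$ of well-separated solutions of uniform type. Evaluating $P$ at this tuple and expanding near it using the approximation inequality, then summing the logarithms over $v\in S$ and invoking the product formula, produces a large lower bound on a "generalized index" of $P$ at the tuple.

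The main obstacle --- and the deepest single ingredient --- is the matching upper bound: a zero-estimate saying that a non-zero polynomial of controlled multi-degree cannot have too large a generalized index at a generic tuple. This is Schmidt's multi-dimensional extension of Roth's lemma, later subsumed by the Faltings--Wüstholz product theorem. The delicate technical point is the combinatorial balancing of the rapidly decreasing degree ratios $d_i/d_{i+1}$ against the height growth of the $\mathbf{x}^{(j_i)}$, arranged so that the lower and upper index bounds become strictly incompatible. The resulting contradiction shows that all but finitely many of the assumed solutions must lie in a finite union of proper subspaces, establishing the theorem.
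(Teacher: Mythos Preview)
The paper does not prove this theorem at all: Theorem~\ref{schli} is stated as a known result and simply cited to \cite{bomb}, \cite{schmidt}, and \cite{zannier}, with no argument given. So there is no ``paper's own proof'' to compare against.

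Your outline is a faithful high-level sketch of the classical Schmidt--Schlickewei proof (pigeonhole reduction to solutions of uniform type, auxiliary multi-homogeneous polynomial via an absolute Siegel lemma, index lower bound from the product formula, and the zero estimate via Roth's lemma or the Faltings--W\"ustholz product theorem). As a summary it is fine, but in the context of this paper the appropriate ``proof'' is simply a citation, since the Subspace Theorem is used here as a black box.
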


The following lemma, established in \cite{corv}, is used at several places in the proof of the main result
of \cite{corv}. 

\begin{lemma}\label{lemCZ1}
Let $K$ be a number field which is  Galois  over $\mathbb{Q}$  and $S$  be a finite set of places, containing all  the archimedean places.  Let $\sigma_1,\ldots,\sigma_n$  be distinct automorphisms of $K$ for some integer $n\geq 1$ and let 
$\lambda_1,\ldots,\lambda_n$  be non-zero elements of $K$. 
Let  $\varepsilon>0$  be a positive real number and $w\in S$  be a distinguished place.  
Let $c > 0$ be a real number and let $\mathfrak{E}\subset \mathcal{O}_S^\times$ be the set of solutions $u\in\mathcal{O}_S^\times$  
of the inequality
\begin{equation*}
0< |\lambda_1  \sigma_1(u)+\cdots+\lambda_n \sigma_n(u)|_w<
 c \max\{|\sigma_1(u)|_w,\ldots,|\sigma_n(u)|_w\} H(u)^{-\varepsilon}.
\end{equation*}
If $\mathfrak{E}$ is an infinite subset of $\mathcal{O}_S^\times$, 
then there exists a non-trivial linear relation of the form 
$$
a_1 \sigma_1(u)+\cdots+a_n \sigma_n(u)=0,\quad \mbox{with }  a_i\in K    %%y 
$$
which holds for infinitely many elements  $u$ in $\mathcal{O}_S^\times$. %%y 
\end{lemma}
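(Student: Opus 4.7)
The plan is to apply the Subspace Theorem (Theorem~\ref{schli}) to the vectors
$$\mathbf{x}_u := (\sigma_1(u),\ldots,\sigma_n(u)) \in K^n, \qquad u \in \mathfrak{E}.$$
As a preliminary reduction I would enlarge $K$ and $S$ (which is harmless) so that $K$ is Galois over $\mathbb{Q}$, $S$ is stable under $\mathrm{Gal}(K/\mathbb{Q})$, and $\lambda_1,\ldots,\lambda_n \in \mathcal{O}_S^\times$. With this setup each $\sigma_i(u)$ is an $S$-unit, so $\|\mathbf{x}_u\|_v = 1$ for $v \notin S$, giving $H(\mathbf{x}_u) = \prod_{v \in S}\|\mathbf{x}_u\|_v$. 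Also, since there are only $n$ choices for which $|\sigma_i(u)|_\omega$ realises the maximum appearing in the hypothesis, one may pass to an infinite subfamily of $\mathfrak{E}$ on which the maximizing index is constant; after reindexing I may assume it is $i=1$.

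At the distinguished place $\omega$ I take the linearly independent (because $\lambda_1 \neq 0$) forms
$$L_{1,\omega}(X_1,\ldots,X_n) = \lambda_1 X_1 + \cdots + \lambda_n X_n, \qquad L_{i,\omega} = X_i \ (i \geq 2),$$
and at every other $v \in S$, I take $L_{i,v} = X_i$ for all $i$. The hypothesis gives
$$|L_{1,\omega}(\mathbf{x}_u)|_\omega \;<\; c\,|\sigma_1(u)|_\omega H(u)^{-\varepsilon},$$
so the $\omega$-contribution to the numerator is bounded by $c\,H(u)^{-\varepsilon}\prod_{i=1}^{n}|\sigma_i(u)|_\omega$. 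Combining with the contributions at the remaining $v \in S$ (each equal to $\prod_i |\sigma_i(u)|_v$) and invoking the product formula $\prod_{v \in S}|\sigma_i(u)|_v = 1$ for each $S$-unit $\sigma_i(u)$, I obtain
$$\prod_{v \in S}\prod_{i=1}^{n}|L_{i,v}(\mathbf{x}_u)|_v \;\leq\; c\, H(u)^{-\varepsilon}.$$
Dividing by $\prod_{v\in S}\|\mathbf{x}_u\|_v^{\,n} = H(\mathbf{x}_u)^n$ and using the elementary inequality $H(\mathbf{x}_u) \geq H(\sigma_1(u)) = H(u)$ yields
$$\prod_{v \in S}\prod_{i=1}^{n}\frac{|L_{i,v}(\mathbf{x}_u)|_v}{\|\mathbf{x}_u\|_v} \;\leq\; \frac{c}{H(\mathbf{x}_u)^{n+\varepsilon}}.$$

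By Northcott's theorem there are only finitely many $u \in \mathfrak{E}$ with $H(u)$ bounded, so I may discard them and assume $H(\mathbf{x}_u)$ is arbitrarily large, absorbing the constant $c$ into a slightly smaller $\varepsilon'>0$ in the exponent. Theorem~\ref{schli} then confines the vectors $\mathbf{x}_u$ to a finite union of proper subspaces of $K^n$; a pigeonhole argument extracts an infinite subfamily lying in a single hyperplane, which is precisely a non-trivial relation $a_1\sigma_1(u)+\cdots+a_n\sigma_n(u)=0$ with $a_i \in K$, valid for infinitely many $u \in \mathcal{O}_S^\times$.

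The main technical obstacle is the bookkeeping required to make the $S$-unit cancellation via the product formula work cleanly: one must enlarge $S$ to be Galois-stable and contain the supports of all $\lambda_i$, verify the height comparison $H(\mathbf{x}_u) \geq H(u)$ with the chosen normalizations of $|\cdot|_v$ in \eqref{eq2.1}, and handle the dependence of the maximizing index $i$ on $u$ by a preliminary pigeonhole. Once these are in place, the application of Theorem~\ref{schli} is essentially mechanical.
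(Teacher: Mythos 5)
You should note at the outset that the paper does not actually prove Lemma~\ref{lemCZ1}: it is quoted from Corvaja--Zannier \cite{corv}. Your argument is exactly the standard one (and the same scheme the paper uses to prove the variant Lemma~\ref{lem2.1}): feed the vectors $(\sigma_1(u),\ldots,\sigma_n(u))$ into Theorem~\ref{schli} with the single non-diagonal form $\sum_i\lambda_iX_i$ placed at $\omega$, cancel the diagonal contributions by the product formula for $S$-units, and pigeonhole the finitely many exceptional subspaces down to one hyperplane. The preliminary reductions (making $S$ Galois-stable so that each $\sigma_i(u)$ is an $S$-unit, fixing the maximizing index by pigeonhole, using Northcott to discard the $u$ of bounded height and to absorb $c$) are all appropriate; note only that ``enlarge $K$ so that it is Galois'' is vacuous since $K$ is Galois by hypothesis, and that the condition $\lambda_i\in\mathcal{O}_S^\times$ is never used.

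The one genuine flaw is the height comparison. After the product-formula computation you correctly reach $\prod_{v\in S}\prod_i |L_{i,v}(\mathbf{x}_u)|_v/\|\mathbf{x}_u\|_v\le c\,H(u)^{-\varepsilon}H(\mathbf{x}_u)^{-n}$; to convert this into $c\,H(\mathbf{x}_u)^{-n-\varepsilon}$ you would need $H(u)\ge H(\mathbf{x}_u)$, but you invoke $H(\mathbf{x}_u)\ge H(u)$, which points the wrong way, and the bound you then assert is false in general: for a fundamental unit $u>1$ of a real quadratic field with $n=2$, $\sigma_1=\mathrm{id}$ and $\sigma_2$ the conjugation, one computes $H(\mathbf{x}_u)=u=H(u)^2$, so the exponent $n+\varepsilon$ is not attainable. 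The correct and standard estimate goes in the other direction: $H(\mathbf{x}_u)\le\prod_i H(\sigma_i(u))=H(u)^n$, whence $H(u)^{-\varepsilon}\le H(\mathbf{x}_u)^{-\varepsilon/n}$ and the product is at most $c\,H(\mathbf{x}_u)^{-n-\varepsilon/n}$. Since any exponent strictly larger than $n$ suffices for Theorem~\ref{schli} (after absorbing $c$ using $H(u)\to\infty$), your proof goes through once this step is repaired; compare the paper's own bound $H(\mathbf{X})\le|q|H^n(u)$ in the proof of Lemma~\ref{lem2.1}, which is used in exactly this (correct) direction.
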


A slight modification of Lemma \ref{lemCZ1},  yields the following. 

\begin{lemma}\label{lem2.1}
Let $K$ be a number field which is  Galois  over $\mathbb{Q}$  and $S$  be a finite set of places, containing all  the archimedean places.  Let $\sigma_1,\ldots,\sigma_n$  be distinct automorphisms of $K$ for some integer $n\geq 1$ and let 
$\lambda_0, \lambda_1,\ldots,\lambda_n$  be non-zero elements of $K$. 
Let  $\varepsilon>0$  be a positive real number and $w\in S$  be a distinguished place.  Let $\mathfrak{E}\subset \mathcal{O}_S^\times\times\mathbb{Z}\backslash\{0\}$ be the subset  defined as 
\begin{equation*}\label{eq2.3}
\tag{2.2}
\mathfrak{E} := \left\{ (u,q)\in \mathcal{O}_S^\times \times\mathbb{Z}\backslash\{0\}  %%y 
 \ :  0<\ |\lambda_0+\lambda_1 q\sigma_1(u)+\cdots+\lambda_nq\sigma_n(u)|_w<\frac{\max\{|q\sigma_1(u)|_w,\ldots,|q\sigma_n(u)|_w\}}{|q|^{n+\varepsilon}H(u)^\varepsilon}\right\}.
\end{equation*}
If $\mathfrak{E}$ is infinite subset of $\mathcal{O}_S^\times\times\mathbb{Z}\backslash\{0\}$, then there exists a non-trivial linear relation of the form 
$$
a_1 \sigma_1(u)+\cdots+a_n \sigma_n(u)=0,\quad \mbox{with }  a_i\in K    %%y 
$$
which holds for infinitely many elements $u$ in $\mathcal{O}_S^\times$ along the pairs $(u,q)\in\mathfrak{E}$. %%y 
\end{lemma}

\begin{proof}
In order to prove this lemma, we shall apply  Theorem \ref{schli} as in the proof \cite[Lemma 1]{corv}. Without loss of generality, we can assume that    %%y 
\begin{equation*}
|q\sigma_1(u)|_w=\max\{|q\sigma_1(u)|_w,\ldots,|q\sigma_n(u)|_w\}
\end{equation*}
for all $(u,q)\in \mathfrak{E}$. For $\mathit{v}\in S$, let us define $n+1$ linear forms $L_{\mathit{v},0},\ldots,L_{\mathit{v},n}$ in $n+1$ variables ${\bf  x}=(x_0,x_1,\ldots,x_n)$ as follows: Put
$ L_{w,0}(x_0,x_1,\ldots,x_n)=X_0$ and $L_{w,1}(x_0,x_1,\ldots,x_n)=\lambda_0 x_0+\lambda_1 x_1+\cdots+\lambda_n x_n$.  For $2\leq i\leq n$,  define
$L_{w,i}(x_0,x_1,\ldots,x_n)=x_i.$  Also, for each  $\mathit{v}\neq w\in S$,  and  $0\leq j\leq n $, we let
$L_{\mathit{v},j}(x_0,x_1,\ldots,x_n)=x_j.$ Take ${\bf x}=(1, q\sigma_1(u),\ldots,q\sigma_n(u))\in K^{n+1}$ 
and consider the product 
\begin{equation*}
\prod_{\mathit{v}\in S}\prod_{i=0}^n\frac{|L_{\mathit{v},i}(\bf x)|_\mathit{v}}{||\bf x||_\mathit{v}}.
\end{equation*}
Using the fact that $L_{v,j}({\bf x}) =q \sigma_j(u)$  for $2\leq j\leq n $ and  that the $\sigma_j(u)$  are $S$-units,  by the product formula,  we obtain 
%$$
%\prod_{\mathit{v\in S}}\prod_{j=2}^{n}|L_{\mathit{v},j}(\mathbf{X})|_\mathit{v}
%= \prod_{\mathit{v}\in S}\prod_{j=2}^{n}|q|_\mathit{v}\prod_{j=2}^{n}\prod_{\mathit{v}\in S}|\sigma_j(u)|_\mathit{v}.
%$$ %%y 
%Since 
%$$
%\prod_{\mathit{v}\in S}|\sigma_j(u)|_\mathit{v}=\prod_{\mathit{v}\in M_K}|\sigma_j(u)|_\mathit{v}=1.  
%$$
%Consequently, the above equality gives 
\begin{equation*}\label{eq2.4}
\tag{2.3}
\prod_{\mathit{v\in S}}\prod_{j=2}^{n}|L_{\mathit{v},j}(\mathbf{x})|_\mathit{v}=\prod_{\mathit{v}\in S}\prod_{j=2}^{n}|q|_\mathit{v}\leq \prod_{v\in M_\infty}\prod_{j=2}^{n}|q|_\mathit{v}= |q|^{n-1}.
\end{equation*} 
Now we estimate $\prod_{\mathit{v}\in S}\prod_{i=0}^n||\bf x||_\mathit{v}$:
\begin{equation*}\label{eq2.5}
\tag{2.4}
\prod_{\mathit{v}\in S}\prod_{i=0}^n||{\bf x}||_\mathit{v}=\prod_{i=0}^n\left(\prod_{\mathit{v}\in S}||{\bf x}||_\mathit{v}\right)\geq (H({\bf x}))^{n+1}=H^{n+1}(1,q\sigma_1(u),\ldots,q\sigma_n(u))
\end{equation*}
since $||{\bf x}||_\mathit{v}\leq 1$ for all $\mathit{v}$ not in $S$. 
\smallskip
     
Since $L_{\mathit{v},0}({\bf x})=1$ for all $\mathit{v}\in S$ and  by the  product formula, we get    %%y 
$$
\prod_{\mathit{v}\neq w\in S}|q\sigma_1(u)|_\mathit{v}=\left(\prod_{\mathit{v}\neq w\in S}|q|\right)(|\sigma_1(u)|_w)^{-1}. 
$$

Thus  from \eqref{eq2.3}, \eqref{eq2.4} and \eqref{eq2.5}, we obtain
\begin{align*}
\prod_{\mathit{v}\in S}\prod_{i=0}^n\frac{|L_{\mathit{v},i}(\bf x)|_\mathit{v}}{||\bf x||_\mathit{v}}&\leq \frac{|\lambda_0+\lambda_1 q\sigma_1(u)+\cdots+\lambda_n q\sigma_n(u)|_w |q|^n}{|q\sigma_1(u)|_w}\frac{1}{H^{n+1}({\bf x})}\\
&\leq\frac{\max\{|q\sigma_1(u)|_w,\ldots,|q\sigma_n(u)|_w\}}{|q\sigma_1(u)|_w |(|q|H(u))^\varepsilon}\frac{1}{H({\bf x})^{n+1}}, 
\end{align*}
as $(u,q)\in\mathfrak{E}$. Using that %%y 
$$
|q\sigma_1(u)|_w=\max\{|q\sigma_1(u)|_w,\ldots,|q\sigma_n(u)|_w\}, 
$$
we get 
$$
\prod_{\mathit{v}\in S}\prod_{i=0}^n\frac{|L_{\mathit{v},i}(\bf x)|_\mathit{v}}{||\bf x||_\mathit{v}}\leq \frac{1}{H({\bf x})^{n+1}}\frac{1}{(|q|H(u))^\varepsilon}.
$$
Since the height of the vector ${\bf x}=(1,q\sigma_1(u),\ldots,q\sigma_n(u))$ satisfies 
$
H({\bf x})\leq |q|H(u)^{[K:\mathbb{Q}]}=|q|H(u)^n,
$
the above estimate becomes 
$$
\prod_{\mathit{v}\in S}\prod_{i=0}^n\frac{|L_{i,\mathit{v}}(\bf x)|_\mathit{v}}{||\bf x||_\mathit{v}}\leq \frac{1}{H({\bf x})^{n+1}}\frac{1}{H({\bf x})^{\varepsilon/[K:\mathbb{Q}]}}=\frac{1}{H({\bf x})^{n+1+\varepsilon/[K:\mathbb{Q}]}}.
$$
Therefore  by Theorem \ref{schli}, there exists a non-trivial relation of the form
\begin{equation*}\label{eq2.6}
\tag{2.5}
a_0+a_1 q\sigma_1(u)+\cdots+a_n q\sigma_n(u)=0
\end{equation*}
satisfied by infinitely many pairs $(u,q)\in \mathfrak{E}$. In order to finish the proof, it is enough to prove the following claim.
\smallskip

\noindent{\bf CLAIM.~}  There exists a non-trivial relation as \eqref{eq2.6} with $a_0=0$. 
\bigskip

Assume that $a_0\neq 0$. By rewriting the relation \eqref{eq2.6}, we obtain 
\begin{equation*}\label{eq2.7}
\tag{2.6}
a_0=-a_1 q\sigma_1(u)-\cdots-a_n q\sigma_n(u)\iff 1=-\frac{a_1}{a_0}q\sigma_1(u)-\cdots--\frac{a_n}{a_0}q\sigma_n(u).
\end{equation*}
Since $\frac{a_1}{a_0},\ldots,\frac{a_n}{a_0}$ are not all zero, let  $\frac{a_{i_1}}{a_0},\ldots,\frac{a_{i_r}}{a_0}$ be  the non-zero elements among them. We enlarge our set $S$, so that $\frac{a_{i_1}}{a_0},\ldots\frac{a_{i_r}}{a_0}\in \mathcal{O}^\times_S$.  Since $\sigma_i(u)\in\mathcal{O}^\times_S$ for $i=1,\ldots,n$, from the relation \eqref{eq2.7}, we conclude that $q$ must be an $S$-unit.

Hence, by applying  the  $S$-unit equation theorem of Evertse and van der Poorten-Schlickewei  \cite[Theorem II.4]{zannier} \cite[Theorem II.4]{zannier}(see also \cite{bomb}, \cite{schmidt}) to the relation \eqref{eq2.7},  there exists a non-trivial relation of the form 
$$
a_{i_1}\sigma_{i_1}(u)+\cdots+a_{i_s}\sigma_{i_s}(u)=0 
$$
which holds for infinitely many values of $u$  coming from the pairs $(u,q)\in\mathfrak{E}$.   %%y 
This proves the claim and hence the lemma.
\end{proof}

\section{A Key lemma for the proof of Theorem \ref{maintheorem}}

The following lemma is key to  the proof of Theorem \ref{maintheorem}  and its proof is based on the Subspace Theorem along with the idea in \cite{corv}, with  various modifications.

\begin{lemma}\label{lem3.1}
%\begin{lemma}\label{lem3.1}
Let $K$ be a Galois extension over $\mathbb{Q}$ of degree $n$ and $k\subset K$  be a subfield of degree $d$  over $\mathbb{Q}$. Let $\delta, \beta$ be two non-zero elements of $K$ with $\beta$  irrational. 
Let $S$  be a finite set of places on $K$ containing all the archimedean places 
and  let $\varepsilon>0$  be a given real number.   Let 
\begin{equation*}\label{eq3.1}
\tag{3.1}
\mathcal{B} = \left\{(u, q, p)\in (\mathcal{O}_S^\times\cap k)\times\mathbb{Z}^2 \ : \   
0<|\delta qu+\beta-p|<\frac{1}{H(u)^\varepsilon q^{d+\varepsilon}}\right\}
\end{equation*}
such that for each triple $(u,q,p)\in\mathcal{B}$,  $|\delta q u|>1$. If $\mathcal{B}$ is infinite, then there exist a proper subfield $k'\subset k$, a non-zero element $\delta'$ in $k$  and an infinite subset $\mathcal{B}'\subset \mathcal{B}$  such that for all triples $(u, q, p)\in\mathcal{B}'$ 
we have $u/\delta'\in k'.$
\end{lemma}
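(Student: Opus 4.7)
The plan is to mimic the Subspace Theorem argument of Lemma \ref{lem2.1} but with one additional coordinate carrying the rational integer $p$. First I enlarge $S$ so that it is stable under $\mathrm{Gal}(K/\mathbb{Q})$, contains all archimedean places of $K$, and contains every finite place where $\delta$, $\delta^{-1}$, or $\beta$ fails to be a unit; after this enlargement $\sigma_i(u) \in \mathcal{O}_S^\times$ for every $u$ arising in $\mathcal{B}$. Let $\sigma_1 = \mathrm{id}, \sigma_2, \ldots, \sigma_d \in \mathrm{Gal}(K/\mathbb{Q})$ be representatives of the left cosets of $\mathrm{Gal}(K/k)$, so that their restrictions to $k$ are the $d$ distinct embeddings of $k$ into $K$, and fix the archimedean place $v_0 \in S$ corresponding to the identity embedding. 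To each triple $(u, q, p) \in \mathcal{B}$ I attach the vector
\begin{equation*}
\mathbf{X} = (q\sigma_1(u),\, q\sigma_2(u),\, \ldots,\, q\sigma_d(u),\, p,\, 1) \in K^{d+2}.
\end{equation*}

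At $v_0$ I use the twisted form $L_{1, v_0}(\mathbf{X}) = \delta X_1 + \beta X_{d+2} - X_{d+1}$, which evaluates on $\mathbf{X}$ to $\delta qu + \beta - p$, together with the coordinate forms $L_{j, v_0}(\mathbf{X}) = X_j$ for $2 \leq j \leq d+2$; these $d+2$ forms are linearly independent because $\delta \neq 0$. At every other $v \in S$ I take coordinate forms $L_{j,v}(\mathbf{X}) = X_j$. Combining the hypothesis $|\delta qu + \beta - p| < H^{-\varepsilon}(u)\, q^{-(d+\varepsilon)}$ with $|\delta qu| > 1$, the product formula applied to the $S$-units $\sigma_j(u)$ and to the integers $p,q$, and a routine height bound in the spirit of the estimates at the end of the proof of Lemma \ref{lem2.1}, one verifies the Subspace Theorem inequality
\begin{equation*}
\prod_{v \in S}\prod_{i=1}^{d+2} \frac{|L_{i, v}(\mathbf{X})|_v}{\|\mathbf{X}\|_v} \leq \frac{1}{H(\mathbf{X})^{d+2+\varepsilon/[K:\mathbb{Q}]}}.
\end{equation*}
By Theorem \ref{schli} all but finitely many $\mathbf{X}(u, q, p)$ lie in a finite union of proper subspaces, so after passing to an infinite subset I obtain a fixed non-trivial linear dependence
\begin{equation*}
c_1 q\sigma_1(u) + \cdots + c_d q\sigma_d(u) + c_{d+1} p + c_{d+2} = 0, \qquad c_i \in K,
\end{equation*}
holding for infinitely many $(u, q, p) \in \mathcal{B}$.

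The next task is to eliminate $p$ and $q$ and to extract a pure identity among the conjugates of $u$. If $c_{d+1} \neq 0$, solving the dependence for $p$ and substituting into the defining inequality produces
\begin{equation*}
0 < \left|\, \lambda_0 + \sum_{j=1}^{d} \lambda_j\, q\sigma_j(u)\,\right| < \frac{1}{H^{\varepsilon}(u)\, q^{d+\varepsilon}},
\end{equation*}
with $\lambda_0 = \beta + c_{d+2}/c_{d+1}$, $\lambda_1 = \delta + c_1/c_{d+1}$, and $\lambda_j = c_j/c_{d+1}$ for $j \geq 2$. When $\lambda_0 \neq 0$, Lemma \ref{lem2.1} applies directly and furnishes a non-trivial $K$-linear relation $\sum a_j \sigma_j(u) = 0$ valid for infinitely many $u$; when $\lambda_0 = 0$, dividing through by $q$ strengthens the inequality to one of the type covered by Lemma \ref{lemCZ1}, which yields the same conclusion. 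The case $c_{d+1} = 0$ is easier: either $c_{d+2} = 0$ and the pure relation $\sum c_j \sigma_j(u) = 0$ is already in hand, or $c_{d+2} \neq 0$, in which case the rigid identity $q\sum c_j \sigma_j(u) = -c_{d+2}$ combined with the $S$-unit structure of $u$ (and one further application of Lemma \ref{lem2.1}) again produces a pure relation. In every branch I obtain a non-trivial identity
\begin{equation*}
a_1 \sigma_1(u) + \cdots + a_d \sigma_d(u) = 0, \qquad a_i \in K,
\end{equation*}
holding for infinitely many $u \in \mathcal{O}_S^\times \cap k$.

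Dividing this identity by $\sigma_d(u)$ turns it into a vanishing sum of $S$-units, and by iterated application of the theorem on $S$-unit equations (some proper subsum must itself vanish for infinitely many $u$) I reduce to a two-term relation $\sigma_i(u) = \delta''\, \sigma_j(u)$, $i \neq j$, with $\delta'' \in K^\times$ fixed, holding for an infinite subfamily of $u$. Fixing any $u_0$ in this subfamily and dividing gives $\sigma_i(u/u_0) = \sigma_j(u/u_0)$, so $u/u_0$ is fixed by $\tau := \sigma_j^{-1}\sigma_i$. Since $\sigma_i$ and $\sigma_j$ represent distinct cosets of $\mathrm{Gal}(K/k)$, we have $\tau \notin \mathrm{Gal}(K/k)$, and therefore $k' := k \cap K^{\tau}$ is a proper subfield of $k$; setting $\delta' := u_0 \in k$, we get $u/\delta' \in k'$ for all $u$ in the resulting infinite subset $\mathcal{B}' \subset \mathcal{B}$, which is the desired conclusion. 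The main obstacle is the middle case analysis eliminating $p$ and $q$: in particular the branch $c_{d+1} \neq 0$, $\lambda_0 = 0$ (where the irrationality of $\beta$ does not immediately rescue the argument and one must invoke the strengthened inequality via Lemma \ref{lemCZ1}), together with the rigid branch $c_{d+2} \neq 0$, $c_{d+1} = 0$, which requires exploiting the multiplicative structure of $\mathcal{O}_S^\times$ in order to reduce to a pure linear relation among the conjugates of $u$.
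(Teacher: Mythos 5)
Your overall strategy (Subspace Theorem on the vector $(q\sigma_1(u),\ldots,q\sigma_d(u),p,1)$, elimination of $p$ via Lemma \ref{lem2.1} and Lemma \ref{lemCZ1}, then an $S$-unit-equation endgame) matches the paper's, but there is a concrete gap at the very first and most important step: your choice of linear forms does not satisfy the Subspace inequality you claim. You place the twisted form $\delta X_1+\beta X_{d+2}-X_{d+1}$ at a \emph{single} archimedean place $v_0$. With the normalization \eqref{eq2.1} used in Theorem \ref{schli}, the place $v_0$ only sees $|\delta qu+\beta-p|_{v_0}=|\delta qu+\beta-p|^{d(\rho_{v_0})/[K:\mathbb{Q}]}$, i.e.\ only the $d(\rho_{v_0})/n$-th power of the hypothesis \eqref{eq3.1}. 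Meanwhile your coordinate forms retain the full products $\prod_{v\in S}|p|_v\le|p|$ and $\prod_{v\in S}|q\sigma_j(u)|_v\le|q|$ for $j=2,\ldots,d$, so the numerator of your double product is of size roughly $|p|\,|q|^{d}\bigl(H^{-\varepsilon}(u)q^{-(d+\varepsilon)}\bigr)^{d(\rho_{v_0})/n}$. Since $|p|$ is comparable to $|\delta qu|$ (which can be as large as a power of $H(u)$) and the exponent $d(\rho_{v_0})/n\le 2/n$ is far less than $1$, this quantity tends to infinity rather than being bounded by $H(\mathbf{X})^{-\varepsilon'}$; the verification is not routine, it fails. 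This is exactly why the paper defines, for each archimedean $v\in S_j$, the conjugated form $L_{v,0}=\rho_v(\beta)x_0-x_1+\rho_v(\delta)x_{j+1}$ \emph{replacing the coordinate $p$} at every archimedean place: the product over all $v\in M_\infty$ then telescopes, via $\sum_{v\in M_\infty}d(\rho_v)=[K:\mathbb{Q}]$, to the full small quantity $|\delta qu+\beta-p|$ (equations \eqref{eq3.2}--\eqref{eq3.4}), and simultaneously removes the harmful factor $|p|$ from the numerator. Your argument needs this conjugation-over-all-archimedean-places device; without it the first application of Theorem \ref{schli} is invalid.

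A secondary, less fatal, issue is that your elimination step is considerably more delicate than you indicate. After substituting $p$ from the linear relation, the resulting inequality is still stated in the ordinary complex absolute value, and converting it to the normalized $|\cdot|_\omega$ needed by Lemma \ref{lem2.1} and Lemma \ref{lemCZ1} again loses a power $d(\rho_\omega)/[K:\mathbb{Q}]$, so the bound $\max_j|q\sigma_j(u)|_\omega/(|q|^{d+\varepsilon}H^{\varepsilon}(u))$ required by Lemma \ref{lem2.1} is not immediate; moreover Lemma \ref{lem2.1} requires all coefficients $\lambda_0,\ldots,\lambda_d$ non-zero, and you do not address the branches where some $\lambda_j$ with $j\ge 1$ vanishes, nor the possibility that the relation obtained from the second Subspace application is again contaminated by $p$ (the paper's Claim 2 handles this by a further case analysis exploiting the irrationality of $\beta$ through an automorphism $\rho_0$ with $\rho_0(\beta)\neq\beta$, and by the trace identity \eqref{eq3.19}). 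Your endgame reducing a pure relation $\sum_j a_j\sigma_j(u)=0$ to a two-term relation via vanishing subsums, and hence to $u/\delta'\in k'$, is consistent with how \cite{corv} concludes.
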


\begin{proof}
Since $\mathcal{B}$ is an infinite set of solutions of \eqref{eq3.1},  we first observe that 
we may assume that $H(u)\rightarrow\infty$.  %%y 

Suppose that $H(u)$ is bounded. Then there exists an infinite subset $\mathcal{A}$ of $\mathcal{B}$ such that the number $u$ is constant for all elements in  $\mathcal{A}$, say $u_0$ for all triples $(u,q,p)\in\mathcal{A}$ and $q$ is unbounded along the set $\mathcal{A}$. Now we apply Theorem \ref{schli} 
to the field $\mathbb{Q}$ %%y  
with the input $S=\{\infty\}$, 
linear forms 
$L_{1,\infty}(x_1, x_2,x_3)=\delta u_0 x_1+\beta x_2-x_3$, $L_{i,\infty}(x_1,x_2,x_3)=x_i$ 
for $2\leq i\leq 3$  %%y 
and the points   $(q,1,p)$.  From \eqref{eq3.1}, we see that there is  a $\eta > 0$ such that the inequality %%y 
$$
\prod_{i=1}^3|L_{i,\infty}(q,1,p)|_\infty\leq \frac{1}{(\max\{|q|,1,|p|\})^{\eta}}
$$
holds for infinitely many triples $(q,1,p)\in\mathbb{Z}^3$.  Thus by Theorem \ref{schli}, there exists a proper subspace of $\mathbb{Q}^3$ containing infinitely many triples $(q,1,p)$, i.e.,  we have a non-trivial relation of the form
$$
a_0+a_1p+a_2q=0
$$
satisfied by infinitely many triples of the form $(q,1,p)$. Since $a_i$'s are  integers   %%y 
and $q\rightarrow\infty$ along the set $\mathcal{A}$, we conclude that $a_1\neq 0$.  By substituting the value of $p$ into the inequality \eqref{eq3.1} along the set $\mathcal{A}$, we get
$$
0<\left|\delta q u_0+\beta+\left(\frac{a_0}{a_1}+\frac{a_2}{a_1}q\right)\right|\leq \frac{1}{H(u_0)^\varepsilon}\frac{1}{q^{d+\varepsilon}} \ \iff \  0<\left|\left(\delta u_0+\frac{a_2}{a_1}\right)q+\beta+\frac{a_0}{a_1}\right|\leq \frac{1}{H(u_0)^\varepsilon}\frac{1}{q^{d+\varepsilon}}, $$ 
which is not true as $q\to\infty$.  Therefore, we conclude that $H(u)\to\infty$  along the set $\mathcal{B}$.
\bigskip

Let $\mathcal{H}:=\mbox{Gal}(K/k)\subset\mbox{Gal}(K/\mathbb{Q})=\mathcal{G}$  be the subgroup of the Galois group $\mathcal{G}$ fixing $k$.  
Since $K$ is Galois over $\mathbb{Q}$, we have $K$ is Galois over $k$ and  $|\mathcal{G}/\mathcal{H}| =d$. Therefore, among the $n$ embeddings of $K$, there are exactly $d$ embeddings $\sigma_1,\ldots,\sigma_d$, which  are  the representatives for the left cosets of $\mathcal{H}$ in $\mathcal{G}$ with $\sigma_1$ being the identity. More precisely, 
$$
\mathcal{G}/\mathcal{H}:=\{\mathcal{H}, \sigma_2 \mathcal{H},\ldots,\sigma_d \mathcal{H}\}.
$$
Each automorphism $\rho\in\mbox{Gal}(K/\mathbb{Q})$ defines an archimedean absolute value  on $K$ by the formula
\begin{equation*}\label{eq3.2}
\tag{3.2}
|x|_\rho=|\rho^{-1}(x)|^{d(K)/[K:\mathbb{Q}]},
\end{equation*}
where $| \cdot |$  denotes the usual complex absolute value and $d(K)=1$ if $K\subset \mathbb{R}$ and $d(K)=2$ otherwise. Let $\rho_1$ and $\rho_2$ be two distinct automorphism on $K$, which give rise to the same archimedean absolute values $\mathit{v}$  if and only if 
$\rho^{-1}_1\circ \rho_2$ is a complex conjugation. 
Then   for each $\rho\in\mbox{Gal}(K/\mathbb{Q})$, by \eqref{eq3.2},  we have 
\begin{equation*}\label{eq3.3}
\tag{3.3}
|\delta qu+\beta-p|^{d(K)/[K:\mathbb{Q}]}=|\rho(\delta)\rho(qu)+\rho(\beta)-\rho(p)|_\rho=|\rho(\delta)q\rho(u)+\rho(\beta)-p|_\rho.
\end{equation*}
For each $\mathit{v}\in M_\infty$, let $\rho_\mathit{v}$ be an automorphism defining the valuation $\mathit{v}$ according to \eqref{eq3.2}: $|\alpha|_\mathit{v}:=|\alpha|_{\rho_\mathit{v}}$. Then the set $\{\rho_\mathit{v}:\mathit{v}\in M_\infty\}$ denotes the left cosets of the subgroup generated by the complex conjugation in $\mathcal{G}$.
\smallskip

Denote by $i: K\to \mathbb{C}$, the embedding given by $\alpha \mapsto \bar{\alpha}$, the complex conjugation. Then for each $j = 1, \ldots, d,$ let
$$
S_j = \left\{v\in M_\infty\ : \ \rho_v\vert_k = i\circ\sigma_j: k\hookrightarrow \mathbb{C}\right\}
$$
and hence $S_1\cup\ldots\cup S_d=M_\infty$.    We keep this notation throughout the paper. Now we take the product of the terms in \eqref{eq3.3} where $\rho$ runs through the set $\{\rho_\mathit{v}:\mathit{v}\in M_\infty\}$ to obtain 
\begin{equation*}\label{eq3.4}
\tag{3.4}
\prod_{\mathit{v}\in M_\infty}|\rho_\mathit{v}(\delta)\rho_{\mathit{v}}(qu)+\rho_\mathit{v}(\beta)-p|_\mathit{v}=\prod_{j=1}^d\prod_{\mathit{v}\in S_j}|\rho_{\mathit{v}}(\delta)\sigma_j(qu)+\rho_{\mathit{v}}(\beta)-p|_v.
\end{equation*} 
By \eqref{eq3.3}, we see that 
$$
\prod_{\mathit{v}\in M_\infty}|\rho_\mathit{v}(\delta)\rho_{\mathit{v}}(qu)+\rho_\mathit{v}(\beta)-p|_\mathit{v}=\prod_{\mathit{v}\in M_\infty}|\delta q u+\beta-p|^{d(K)/[K:\mathbb{Q}]}=|\delta q u+\beta-p|^{{\sum_{\mathit{v}\in M_\infty}}d(K)/[K:\mathbb{Q}]}.
$$
From \eqref{eq3.4} and the formula $\displaystyle\sum_{\mathit{v}\in M_\infty}d(K)=[K:\mathbb{Q}]$, it follows that 
\begin{equation*}\label{eq3.5}
\tag{3.5}
\prod_{j=1}^d\prod_{\mathit{v}\in S_j}|\rho_{\mathit{v}}(\delta)\sigma_j(qu)+\rho_{\mathit{v}}(\beta)-p|_v=|\delta q u+\beta-p|.
\end{equation*}  

Now, for each $\mathit{v}\in S$, we define $d+2$ linearly independent linear forms in $d+2$  variables as follows: For $j = 1, 2, \ldots, d$ and for  $\mathit{v}\in S_j$,  let  
\begin{eqnarray*}
L_{\mathit{v},0}(x_0,x_1,\ldots,x_{d+1})&=& \rho_{\mathit{v}}(\beta) x_0 - x_1 + \rho_{\mathit{v}}(\delta) x_{j+1}\\   %%y
L_{\mathit{v},1}(x_0,x_1,\ldots,x_{d+1})&=& x_0,
\end{eqnarray*}
 and for $2\leq i\leq d+1$, put
$$
L_{v, i}(x_1,\ldots,x_{d+1})=x_i.
$$
Also, for  $\mathit{v}\in S\backslash{M_\infty}$ and for  $0\leq i \leq d+1$,   let  
$$
L_{v,i}(x_1,\ldots,x_{d+1})=x_i.
$$
Take points  $\mathbf{x}$ in $K^{d+2}$ as
$$
\mathbf{x}=(1,p,q\sigma_1(u),\ldots,q\sigma_d(u)) \in K^{d+2}.
$$
In order to apply  Theorem \ref{schli}, we need to calculate the following quantity 
\begin{equation*}\label{eq3.6}
\tag{3.6}
\prod_{\mathit{v\in S}}\prod_{i=0}^{d+1}\frac{|L_{\mathit{v},i}(\mathbf{x})|_\mathit{v}}{||\mathbf{x}||_\mathit{v}}.
\end{equation*}
Using the fact that $L_{\mathit{v},i}(\mathbf{x})=q\sigma_i(u)$, for $2\leq i\leq d+1$ and that the $\sigma_j(u)$'s are $S$-units, by the product formula, we obtain

\begin{equation*}\label{eq3.7}
\tag{3.7}
\prod_{\mathit{v\in S}}\prod_{i=2}^{d+1}|L_{\mathit{v},i}(\mathbf{x})|_\mathit{v}=\prod_{\mathit{v}\in S}\prod_{i=2}^{d+1}|q|_\mathit{v}\leq \prod_{v\in M_\infty}\prod_{i=2}^{d+1}|q|_\mathit{v}=\prod_{i=2}^{d+1}|q|^{\sum_{\mathit{v}\in M_\infty}d(K)/[K:\mathbb{Q}]}\leq |q|^d.
\end{equation*}
%Then, from the formula $\sum_{\mathit{v}\in M_\infty}d(K)=[K:\mathbb{Q}]$, we get 
%
%\prod_{\mathit{v\in S}}\prod_{i=2}^{d+1}|L_{\mathit{v},i}(\mathbf{X})|_\mathit{v}\leq \prod_{v\in M_\infty}\prod_{i=2}^{d+1}|q|_\mathit{v}=\prod_{i=2}^{d+1}|q|^{\sum_{\mathit{v}\in M_\infty}d(\rho_\mathit{v})/[K:\mathbb{Q}]}=|q|^d.
%\end{equation*}
Since $||\mathbf{X}||_\mathit{v}\leq  1$  for all $\mathit{v}\notin S$, we estimate the denominators in \eqref{eq3.6} as 
\begin{equation*}\label{eq3.8}
\tag{3.8}
\prod_{\mathit{v\in S}}\prod_{i=0}^{d+1}||\mathbf{x}||_\mathit{v}\geq \prod_{\mathit{v\in M_K}}\prod_{i=0}^{d+1}||\mathbf{x}||_\mathit{v}=\prod_{i=0}^{d+1}\left(\prod_{\mathit{v\in M_K}}||\mathbf{x}||_\mathit{v}\right) = \prod_{i=0}^{d+1}H(\mathbf{x})\geq H(\mathbf{x})^{d+2}.
\end{equation*}
%since $||\mathbf{X}||_\mathit{v}\leq 1$ for all $\mathit{v}\not\in S$. Thus, we get, 
%\begin{equation*}\label{eq3.7}
%\tag{3.7}
%\prod_{\mathit{v\in S}}\prod_{i=0}^{d+1}||\mathbf{X}||_\mathit{v}\geq H(\mathbf{X})^{d+2}.
%\end{equation*}
By \eqref{eq3.6}, \eqref{eq3.7}  and \eqref{eq3.8}, it follows that 
$$
\prod_{\mathit{v\in S}}\prod_{i=0}^{d+1}\frac{|L_{\mathit{v},i}(\mathbf{x})|_\mathit{v}}{||\mathbf{x}||_\mathit{v}}\leq \frac{1}{H(\mathbf{x})^{d+2}}|q|^d|\delta q u+\beta-p|.
$$
Thus, from \eqref{eq3.1}, we have
$$
\prod_{\mathit{v\in S}}\prod_{i=0}^{d+1}\frac{|L_{\mathit{v},i}(\mathbf{x})|_\mathit{v}}{||\mathbf{x}||_\mathit{v}}\leq \frac{1}{H(\mathbf{x})^{d+2}}|q|^d\frac{1}{H^{\varepsilon}(u)}\frac{1}{|q|^{d+\varepsilon}}=\frac{1}{H(\mathbf{x})^{d+2}}\frac{1}{(|q|H(u))^{\varepsilon}}.
$$
Notice that 
\begin{eqnarray*}
H(\mathbf{x})&=&\prod_{\mathit{v}\in M_K}\mbox{max}\{1, |p|_\mathit{v},|q\sigma_1(u)|_\mathit{v},\ldots,|q\sigma_d(u)|_\mathit{v}\}
\leq\prod_{\mathit{v}\in S}\mbox{max}\{1, |p|_\mathit{v},|q\sigma_1(u)|_\mathit{v},\ldots,|q\sigma_d(u)|_\mathit{v}\}\\
&\leq& \prod_{\mathit{v}\in S}\mbox{max}\{1,|p|_\mathit{v},|q|_\mathit{v} \}\prod_{\mathit{v}\in S}\mbox{max}\{1,|\sigma_1(u)|_\mathit{v},\ldots,|\sigma_d(u)|_\mathit{v}\}\\
&\leq& \max\{|p|, |q|\}\left(\prod_{\mathit{v}\in S}\mbox{max}\{1,|\sigma_1(u)|_\mathit{v}\}\right)\cdots \left(\prod_{\mathit{v}\in S}\mbox{max}\{1,|\sigma_d(u)|_\mathit{v}\}\right)=\max\{|p|, |q|\}H(u)^d.
\end{eqnarray*} 
By using the inequality 
$
||x|-|y||\leq |x-y|
$
and since  $H(u)\to \infty$ for $(u, q, p)\in \mathcal{B}$, from \eqref{eq3.1}, we conclude that $|p|\leq |\delta q u+\beta|+1.$  Since $|u|^{\frac{1}{d}}\leq H(u)$,   we get that
$$
|p|\leq |\delta q u+\beta|+1\leq |q||\delta+\beta|H^d(u)+1\leq |q|H^{2d}(u)
$$
for all but finitely many triples $(u, q, p)\in \mathcal{B}$. By combining both these  inequalities, we obtain $H(\mathbf{x})\leq |q| H(u)^{3d}$, and hence   $H(\mathbf{x})^{1/3d} \leq |q|H(u)$. Therefore, we get 
$$
\prod_{\mathit{v\in S}}\prod_{i=0}^{d+1}\frac{|L_{\mathit{v},i}(\mathbf{x})|_\mathit{v}}{||\mathbf{x}||_\mathit{v}}\leq \frac{1}{H(\mathbf{x})^{d+2}}\frac{1}{(|q|H(u))^\varepsilon}\leq \frac{1}{H(\mathbf{x})^{d+2+(\varepsilon)/3d}} = \frac{1}{H(\mathbf{x})^{d+2+ \varepsilon'}},
$$
for some $\varepsilon' >0$ and for infinitely many tuples $(1,p,q\sigma_1(u),\ldots,q\sigma_d(u))$ along the triples $(u,q,p)\in\mathcal{B}$.  By  Theorem \ref{schli}, there exists a proper subspace of $K^{d+2}$ containing infinitely many $\mathbf{x}=(1,p,q\sigma_1(u),\ldots,q\sigma_d(u))$ along the triples  $(u,q,p) \in \mathcal{B}$, i.e.,    
 we have a non-trivial linear relation of the form
\begin{equation*}\label{eq3.9}
\tag{3.9}
a_0+a_1 p+b_1q\sigma_1(u)+\cdots+b_d q \sigma_d(u)=0,\quad a_i, b_j\in K,
\end{equation*}
satisfied by all the triples $(u, q, p)\in\mathcal{B}_1$ for an infinite subset of $\mathcal{B}_1\subset\mathcal{B}$.   
\bigskip

Under the hypotheses of the Main Theorem in \cite{corv}, the authors established 
the existence of such a non-trivial linear relation with $a_0 = 0$. 
The present situation is slightly more complicated. 
As in \cite{corv}, we will establish that there is 
a non-trivial linear relation as above with $a_0 = a_1 = 0$, and then we will conclude exactly as in \cite{corv}.
\smallskip

\noindent{\bf Claim 1.}  At least one of the $b_j$'s is non-zero in the relation \eqref{eq3.9}.
\bigskip

If not, suppose $b_i=0$  for all $1\leq i\leq d$.  Then from \eqref{eq3.8}, we have 
\begin{equation*}\label{eq3.10}
\tag{3.10}
0\neq p=\frac{-a_0}{a_1}\in K.
\end{equation*}
We deduce from \eqref{eq3.1} and \eqref{eq3.10} that
\begin{equation*}\label{eq3.11}
\tag{3.11}
0<\left|\delta q u+\beta+\frac{a_0}{a_1}\right|<\frac{1}{H(u)^\varepsilon q^{d+\varepsilon}}
\end{equation*}
holds for infinitely many pairs $(u,q)$ along the set $\mathcal{B}_1$.  Since $\beta$ is an irrational, from \eqref{eq3.10} we have $\beta+\frac{a_0}{a_1}\neq 0$.  We then apply Theorem \ref{schli} with 
$S$ being the finite set composed of the archimedean places on $K$, the linear 
forms $L_{\mathit{v},1}(x_1, x_2)=(\beta+\frac{a_0}{a_1})x_1+\delta x_2$, $L_{\mathit{v},2}(x_1, x_2)=x_1$~~ for $\mathit{v}\in S$, 
and the pairs $(1,qu)\in K^2$. Thus  by Theorem \ref{schli}, we get a  non-trivial relation of the form 
$$
c_0+c_1 qu=0,
$$
which holds for infinitely many pairs $(u,q)$ along the set $\mathcal{B}_1$.  This implies that $qu$  is a constant for infinitely many pairs $(u,q)\in\mathcal{B}_1$. However, this violates the inequality \eqref{eq3.11} because $H(u)\to\infty$ as we vary $(u,q)$ in $\mathcal{B}_1$. Therefore we conclude that at  least one of the $b_j$'s is non-zero in the relation \eqref{eq3.9}.
\bigskip

\noindent{\bf Claim 2.}  There exists a non-trivial relation as \eqref{eq3.9} with $a_0=a_1=0$.
\smallskip

Suppose  that $a_0\neq 0$. Then by re-writing the  relation \eqref{eq3.9}, we obtain
\begin{equation*}\label{eq3.12}
\tag{3.12}
\beta=-\beta\left(\frac{a_1}{a_0}p+\frac{b_1}{a_0}q\sigma_1(u)+\cdots+\frac{b_d}{a_0}q\sigma_d(u)\right).
\end{equation*}
Substituting the value of $\beta$ from \eqref{eq3.12} in \eqref{eq3.1}, we get

\begin{equation*}\label{eq3.13}
\tag{3.13}
0<\left|\delta q u-(\beta a_1/a_0+1)p-\beta\left(\frac{b_1}{a_0}q\sigma_1(u)+\cdots+\frac{b_d}{a_0}q\sigma_d(u)\right)\right|<\frac{1}{H(u)^\varepsilon}\frac{1}{q^{d+\varepsilon}}.
\end{equation*}
\vspace{.1cm}

The rest of the proof of this claim divided into   two cases, according to $\beta a_1/a_0+1$ is $0$ or not. 
\bigskip

\noindent{\bf Case 1.} $\displaystyle\beta \frac{a_1}{a_0}+1 =0$.

In this case, the relation \eqref{eq3.12}  can be written as 
\begin{equation*}\label{eq3.14}
\tag{3.14}
\beta-p=-\left(\frac{b_1}{a_1}q \sigma_1(u)+\cdots+\frac{b_d}{a_1}q\sigma_d(u)\right).
\end{equation*}
Since $K$ over $\Q$ is Galois and $\beta$  is an algebraic irrational,  there exists an automorphism $\rho_0\in\mbox{Gal}(K/\mathbb{Q})$ such that $\rho_0(\beta)\neq \beta$.  By applying the automorphism $\rho_0$ on both sides of the equality \eqref{eq3.14}, we get 
\begin{eqnarray*}
\rho_0(\beta)-p&=&-\left[\rho_0\left(\frac{b_1}{a_1}\right)q \rho_0\circ\sigma_1(u)+\cdots+\rho_0\left(\frac{b_d}{a_1}\right)q\rho_0\circ\sigma_d(u)\right] \\
&=& -\left[\rho_0\left(\frac{b_1}{a_1}\right)q \sigma_{1,0}(u)+\cdots+\rho_0\left(\frac{b_d}{a_1}\right)q\sigma_{d,0}(u)\right],
\end{eqnarray*}
as the restriction of $\rho_0$ on $k$ belongs to $\{\sigma_1,\ldots,\sigma_d\}$ and  hence    $\sigma_{i,0}\in\{\sigma_1,\ldots,\sigma_d\}$ for $1\leq i\leq d$. Now by subtracting this equality from \eqref{eq3.14}, we obtain  
$$
0\neq \rho_0(\beta)-\beta=c_1 q \sigma_1(u)+\cdots+c_d q\sigma_d(u) := \gamma,\quad c_i\in K
$$
for all the pairs $(q,u)$  along the  triples $(u,q,p)\in\mathcal{B}_1$ with    $\gamma=\rho_0(\beta)-\beta.$ 
We can easily see that in this   relation  at least one of $c_i$'s is non-zero, say $c_{i_1},\ldots, c_{i_s}$ are non-zero elements among them, where $\{i_1,\ldots,i_s\}\subset\{1,\ldots,d\}$.  
Dividing this equality by $\gamma$, we get the non-trivial relation of the kind 
\begin{equation*}\label{eq3.16}
\tag{3.15}
1=\left(\frac{c_{i_1}}{\gamma}q \sigma_{i_1}(u)+\cdots+\frac{c_{i_s}}{\gamma}q \sigma_{i_s}(u)\right).
\end{equation*}
As we have seen in the proof of Lemma \ref{lem2.1}, we enlarge our set $S$ so that $\frac{c_{i_1}}{\gamma},\ldots,\frac{c_{i_s}}{\gamma}\in \mathcal{O}^\times_S$.  Thus  from the relation \eqref{eq3.16}, we also conclude that $q\in\mathcal{O}^\times_S$. We can apply  the  $S$-unit equation theorem of Evertse and van der Poorten-Schlickewei  \cite[Theorem II.4]{zannier} to the relation \eqref{eq3.16}, which entails that there exists a non-trivial relation of the form
\begin{equation*}
b_{i_1} \sigma_{i_1}(u)+\cdots+b_{i_s}\sigma_{i_s}(u)=0
\end{equation*}
holds for infinitely many $u$  coming from the  triples $(u, q,p)\in \mathcal{B}_1$  for an  infinite subset $\mathcal{B}_1\subset\mathcal{B}$.
\bigskip
%%%%%%%%%%%%%%%%%%%%%%%%%%%%%%%%%%%%%%%%%%%%%%%%%%%%%%%%%%%%%%%%%%%%%%%%%%%%%%%%%%%%%%%

\noindent{\bf Case 2.~}  $\beta a_1/a_0+1\neq 0$.
\vspace{.2cm}

By  \eqref{eq3.13}, we have
\begin{equation*}\label{eq3.24}
\tag{3.16}
0<\left|\delta q u-\left(\frac{\beta a_1}{a_0}+1\right)p-\beta\left(\frac{b_1}{a_0}q\sigma_1(u)+\cdots+\frac{b_d}{a_0}q\sigma_d(u)\right)\right|<\frac{1}{H(u)^\varepsilon}\frac{1}{q^{d+\varepsilon}}.
\end{equation*}
 We follow    the similar procedure to the inequality \eqref{eq3.24}   as we have seen in the beginning of this lemma  to  get the following
\begin{align*}
&\prod_{j=1}^d\prod_{\mathit{v}\in S_j}\left|\rho_\mathit{v}\left(\delta-\beta\frac{b_1}{a_0}\right)q\sigma_{\mathit{v}(1)}(u)-\rho_\mathit{v}\left(\frac{\beta a_1}{a_0}+1\right)p-\rho_\mathit{v}\left(\frac{\beta b_2}{a_0}\right)q\sigma_{\mathit{v}(2)}(u)-\cdots-\rho_\mathit{v}\left(\frac{\beta b_d}{a_0}\right) q\sigma_{\mathit{v}(d)}(u))\right|_\mathit{v}\\&=\left|\left(\delta-\beta \frac{b_1}{a_0}\right) q \sigma_1(u) -(\beta a_1/a_0+1)p-\beta\left(\frac{b_2}{a_0}q\sigma_2(u)+\cdots+\frac{b_d}{a_0}q\sigma_d(u)\right)\right|<\frac{1}{H(u)^\varepsilon}\frac{1}{q^{d+\varepsilon}},
\end{align*}
where for  each $\mathit{v}\in M_\infty$ and $j=1,2,\ldots,d$,  we have set $\rho_\mathit{v}\circ\sigma_j=\sigma_{\mathit{v}(j)}$  on the field $k$  and $\{\mathit{v}(1), \ldots,\mathit{v}(d)\}$ is a permutation of $\{1,\ldots,d\}$.  Now for each $\mathit{v}\in S$, we define $d+1$ linearly independent linear forms in $d+1$ variables as follows: for $j=1,\ldots,d$ and for each $\mathit{v}\in S_j$ define
\begin{align*}
L_{\mathit{v},0}(x_0, x_1,\ldots,x_d)&=-\rho_{\mathit{v}}\left(\frac{\beta a_1}{a_0}+1\right)x_0+\rho_\mathit{v}\left(\delta-\beta\frac{b_1}{a_0}\right)x_{\mathit{v}(1)}-\rho_{\mathit{v}}\left(\frac{\beta b_2}{a_0}\right)x_{\mathit{v}(2)}-\cdots-\rho_{\mathit{v}}\left(\frac{\beta b_d}{a_0}\right)x_{\mathit{v}(d)}
\end{align*}
and for $1\leq i\leq d+1$, define 
$
L_{v, i}(x_1,\ldots,x_{d+1})=x_i,
$
Also for $\mathit{v}\in S\backslash{M_\infty}$ and   $0\leq i \leq d+1$,   let  
$
L_{v,i}(x_1,\ldots,x_{d+1})=x_i.
$
Since  in this case   $\beta\frac{a_1}{a_0}+1$ is non-zero,  we see that the linear forms $L_{\mathit{v},0},\ldots,L_{\mathit{v},d}$ are linearly independent for each $\mathit{v}\in S$. Finally, let $\mathbf{x}$ be the point in $K^{d+1}$, which is of the form
$$
{\mathbf{x}}=(p, q\sigma_1(u),\ldots,q\sigma_d(u)).
$$
Then by using Theorem \ref{schli} similar to the first part   of this lemma, we get a non-trivial relation of the form
$$
a_1 p+b_1 q \sigma_1(u)+\cdots+b_d q \sigma_d(u)=0.
$$
Now we prove that there exists a relation with $a_1=0$. In order to prove this, we follow the similar method as in \cite[Lemma 3, Claim]{corv} together with   Lemma \ref{lem2.1}.  If $a_1\neq 0$, then we have 
\begin{equation*}\label{eq3.17}
\tag{3.17}
p=-\frac{b_1}{a_1} q \sigma_1(u)-\cdots-\frac{b_d}{a_1} q\sigma_d(u).
\end{equation*}
%%%%%%%%%%%%%%%%%%%%%%%%%%%%%%%%%%%%%%%%%%%%%%%%%%%%%%%%%%%%%%%%%%%%%%%%%%%%%%%%%%%%%%%%%%%%%%%%%%%%%%%%%%%%%%%%%%%%%%%%%%%%%%%%%%%%%%%%%%%%%%%%%%%%%%%%%%%%%%%%%%%%%%%%
First suppose that   $\displaystyle\sigma_j\left(\frac{b_1}{a_1}\right) \ne \frac{b_j}{a_1}$ for some $j$ with $2\leq j\leq d$. By applying the automorphism $\sigma_j$  on both sides of \eqref{eq3.17} and subtracting it from \eqref{eq3.17}, we obtain a non-trivial   relation of the form
\begin{equation*}
b_1 q \sigma_1(u)+\cdots+b_d q \sigma_d(u)=0,\quad  \mbox{with~~} b_i\in K.
\end{equation*}

We now assume that   $\displaystyle\frac{b_j}{a_1}= \sigma_j\left(\frac{b_1}{a_1}\right)$  for all $2\leq j\leq d$.  
\smallskip

Note that $b_1 \neq 0$. If not, then $0 =\sigma_j(b_1/a_1) = b_j/a_1$ for every $j$.  Hence  $b_i=0$ for all $i$, which contradicts Claim 1. Therefore, we can assume that $b_1\neq 0$.   By putting $\lambda = -b_1/a_1$,   we re-write \eqref{eq3.17} as 
\begin{equation*}\label{eq3.18}
\tag{3.18}
p=-q(\sigma_1(\lambda)\sigma_1(u)+\cdots+\sigma_d(\lambda)\sigma_d(u)).
\end{equation*}
 Since $b_j \in K$, it may happen that $\lambda$ does not belong to $k$. If $\lambda\notin k$,  then there exists an automorphism $\tau\in \mathcal{H}$ with $\tau(\lambda)\neq \lambda$.  By applying the automorphism $\tau$ on both sides of \eqref{eq3.18} to eliminate $p$, we obtain the linear relation 
$$
(\lambda-\tau(\lambda))q\sigma_1(u)+q\sum_{i=2}^d(\sigma_i(\lambda)\sigma_i(u)-\tau \circ\sigma_i(u))=0.
$$
Note that $\tau \circ\sigma_j$  coincides on $k$ with some $\sigma_i$ and  since $\tau\in \mathcal{H}$  and $\sigma_2,\ldots,\sigma_d \not\in \mathcal{H}$, none of the $\tau \circ \sigma_j$ with $j\geq 2$ belongs in $\mathcal{H}$.  Hence the above relation can be viewed   as a linear combination of   $\sigma_i(u)$'s with the property that the coefficient of $\sigma_1(u)$ will remain $\lambda-\tau(\lambda)$ and which is no-zero.  Therefore, we obtain the required  non-trivial relation among  $\sigma_i(u)$ as desired. 
\smallskip

%$$
%b_1 \sigma_1(u)+\cdots+b_d\sigma_d(u)=0,\quad \mbox{with~} b_i\in K.
%$$ 

Hence,  we can  assume that $\lambda\in k$ and substitute value of $p$ from \eqref{eq3.18} into \eqref{eq3.1}, we get that 

\begin{equation*}\label{eq3.20}
\tag{3.19}
0<\left|-\beta+(\lambda-\delta)q\sigma_1(u)+q\sigma_2(\lambda)\sigma_2(u)+\cdots+q\sigma_d(\lambda)\sigma_d(u)\right|<\frac{1}{H^\varepsilon(u)q^{d+\varepsilon}}
\end{equation*}
holds for infinitely many pairs $(u,q)$ along the triples $(u,q,p)\in\mathcal{B}$.
%%y 
\smallskip

 If $\lambda=\delta$, then by \eqref{eq3.20}, we have
\begin{equation*}\label{eq3.21}
\tag{3.20}
0<\left|-\beta+q\sigma_2(\lambda)\sigma_2(u)+\cdots+q\sigma_d(\lambda)\sigma_d(u)\right|<\frac{1}{H^\varepsilon(u)q^{d+\varepsilon}}.
\end{equation*}
%First we note that 
%$$
%\max\{|\sigma_2(q\lambda u)|,\ldots,|\sigma_d(q\lambda u)|\}\geq \frac{|\beta|}{2d}
%$$
%holds for all but finitely many   pairs $(u,q)$ satisfied \eqref{eq3.21}. 
If  $\displaystyle\max\{|\sigma_2(q\lambda u)|,\ldots,|\sigma_d(q\lambda u)|\}< \frac{|\beta|}{2d}$
for all pairs $(q,u)$ satisfying \eqref{eq3.21}, then, we get 
$$
\left|-\beta+q\sigma_2(\lambda)\sigma_2(u)+\cdots+q\sigma_d(\lambda)\sigma_d(u)\right|\geq \frac{|\beta|}{2}.
$$
Therefore by \eqref{eq3.21}, we have
\begin{equation*}\label{eq3.23}
\tag{3.21}
\frac{|\beta|}{2}\leq \left|-\beta+q\sigma_2(\lambda)\sigma_2(u)+\cdots+q\sigma_d(\lambda)\sigma_d(u)\right|<\frac{1}{H^\varepsilon(u)q^{d+\varepsilon}}.
\end{equation*}
Since $H(u)\to\infty$ along   infinitely many pairs $(u,q)$ satisfy \eqref{eq3.21} and $\beta$ is non-zero, we see that  the inequality \eqref{eq3.23} can have only finitely many solutions in $(q,u)$, a contradiction. Therefore we must have  
$$
\max\{|\sigma_2(q\lambda u)|,\ldots,|\sigma_d(q\lambda u)|\}\geq \frac{|\beta|}{2d}
$$
holds for all but finitely many   pairs $(q,u)$ satisfying \eqref{eq3.21}. 
%This can be re-written as 
%$$
%\max\{|q\sigma_2(u)|,\ldots,|q\sigma_d(u)|\}\geq \frac{\min\{1,|\beta|\}}{2d(\max\{|\sigma_2(\lambda)|,\ldots,|\sigma_d(\lambda)|\})}.
%$$
Thus from \eqref{eq3.21}, we conclude that 
$$
0<\left|-\beta+q\sigma_2(\lambda)\sigma_2(u)+\cdots+q\sigma_d(\lambda)\sigma_d(u)\right|<\frac{1}{H^\varepsilon(u)q^{d+\varepsilon}}<\frac{C\max\{|q\sigma_2(u)|,\ldots,|q\sigma_d(u)|\} }{H^\varepsilon(u)q^{d+\varepsilon}},
$$
where $C=\frac{2d(\max\{|\sigma_2(\lambda)|,\ldots,|\sigma_d(\lambda)|\})}{\min\{1,|\beta|\}}$. Hence by Lemma \ref{lem2.1}, we get a non-trivial relation as desired.  
\smallskip

Now we assume that $\lambda\neq \delta$. In this case the term $(\lambda-\delta)q\sigma_1(u)$ does appear in \eqref{eq3.20}. By applying Lemma \ref{lem2.1} with the distinguished place $w$ as in the case $\lambda=\delta$ and with the inputs $n=d$, $\lambda_1=(\lambda-\delta)$ and $\lambda_i=\sigma_i(\lambda)$ for $i=2,\ldots,d$ we  conclude the same as in the case $\delta=\lambda$. 
\bigskip

Thus by combining all the cases, we  obtain  a non-trivial relation of the form 
$$
b_1 \sigma_1(u)+\cdots+b_d  \sigma_d(u)=0, \quad b_i\in K
$$
 for infinitely many   $u$ along the triples $(u,q,p)\in\mathcal{B}$. This proves our Claim 2.
We then conclude the proof of the theorem exactly as in \cite[Lemma 3]{corv}. 
\end{proof}

\section{Proofs} 
\noindent{\bf Proof of Theorem \ref{maintheorem}.~} Since $\Gamma$ is a finitely generated multiplicative subgroup of $\overline{\mathbb{Q}}^\times$, by enlarging $\Gamma$ if necessary, we can reduce to the situation where  $\Gamma\subset\overline{\mathbb{Q}}^\times$ is the group of $S$-units, namely,
$$
\Gamma=\mathcal{O}^\times_S=\{u\in K:\prod_{\mathit{v}\in S}|u|_\mathit{v}=1\}
$$
of a suitable Galois extension $K$  over $\mathbb{Q}$ containing  $\delta,\beta$  and for a suitable finite set $S$ of places of $K$ containing all the archimedean places. Also, $S$ is stable under Galois conjugation. 
\smallskip

Suppose that the conclusion of Theorem \ref{maintheorem} is not true. Then  there exists an infinite subset $\mathcal{B} \subset\Gamma\times \mathbb{Z}^2$ of solutions $(u,q,p)$   to the inequality  \eqref{eq1.1}.  Then  inductively,  we construct  a sequence $\{\delta_i\}_{i=0}^\infty$  of elements of  $K$,  an infinite decreasing chain $\mathcal{B}_i$ of an infinite subset of $\mathcal{B}$  and an infinite strictly decreasing chain $k_i$ of subfields of $K$ with the following properties:
\bigskip

{\it For each integer $n\geq 0$, $\mathcal{B}_n\subset (k_n\times\mathbb{Z}^2)\cap\mathcal{B}_{n-1}$, $k_n\subset k_{n-1}$, $k_n\neq k_{n-1}$  and for all but finitely many triples $(u, q,p)\in\mathcal{B}_n$   satisfying  
\begin{equation*}\label{eq4.1}
\tag{4.1}
|\delta_0\cdots\delta_n q u+\beta-p|<\frac{1}{H(u)^{\varepsilon/(n+1)}q^{d+\varepsilon}}.
\end{equation*}}
If such a sequence exists, then  we  eventually get a contradiction to  the fact that the number field $K$ does not admit an infinite  strictly  decreasing chain of subfields.  Thus  in order to complete the proof of the theorem, it is enough to construct such a sequence.
\smallskip

We proceed our construction by applying induction on $n$: for $n=0$, put $\delta_0=\delta$,  $k_0=K$  and $\mathcal{B}_0=\mathcal{B}$, and we are done  in this case, since by our supposition the inequality 
$$
|\delta_0 q u+\beta-p|<\frac{1}{H(u)^\varepsilon q^{d+\eps}}
$$ 
has infinitely many solutions in triples $(u, q, p)$. 
Then by the induction hypothesis, we assume that  $\delta_n$,  $k_n$  and $\mathcal{B}_n$  exist for an integer $n\geq 0$ such that  \eqref{eq4.1} holds.
Then by  Lemma \ref{lem3.1} to the choices $\delta=\delta_0\delta_1\cdots\delta_n$ and $k=k_n$, we obtain  an element $\delta_{n+1}\in k_n$, a proper subfield $k_{n+1}$ of $k_n$  and an infinite set $\mathcal{B}_{n+1}\subset\mathcal{B}_n$  such that all triples $(u, q, p)\in\mathcal{B}_{n+1}$  satisfy $u=\delta_{n+1}u'$  with $u'\in k_{n+1}$.
Since $u'\in K$, $H(\delta_{n+1}u')\geq H(\delta)^{-1} H(u')$, we have in  particular that for almost for all $u'\in K$, $H(\delta_{n+1} u')\geq H^{\frac{n+1}{n+2}}(u')$.  Therefore by replacing $u$ by $\delta_{n+1}u'$, for all but finitely many triples $(u',q,p)\in\mathcal{B}_{n+1}$, we have the following inequality
$$
|\delta_0\delta_1\cdots\delta_n q \delta_{n+1}u'+\beta-p|<\frac{1}{H(u')^{\varepsilon/(n+2)}q^{d+\varepsilon}}.
$$
The proof of the theorem is now complete by the  induction. $\hfill\Box$
\bigskip

\noindent{\bf Proof of Theorem \ref{maintheorem2}.~}  Suppose  that $\alpha$ is an algebraic number. Since $|\alpha|>1$,  we have $|\lambda\alpha^n|>1$ for all large enough integers $n$. Choose $\epsilon'>0$  such that   $\varepsilon'<\varepsilon\log 2/\log H(\alpha)$. Then  we get  
\begin{equation*}
0<\Vert \lambda\alpha^n+\beta\Vert<H(\alpha^n)^{-\varepsilon'}
\end{equation*}
holds true for infinitely many natural numbers $n$.  On the other hand,  by taking  $\Gamma$ to be   the subgroup  generated by $\alpha$ and  $q=1$, $\delta=\lambda$ and $u=\alpha^n$, we  see that  the hypothesis of Theorem \ref{maintheorem} is satisfied, but not the assertion, which is a contradiction.  Thus   $\alpha$ must be a transcendental number and hence the theorem. $\hfill\Box$ 
\bigskip

\noindent{\bf Acknowledgements.} I am  grateful to both the anonymous referees whose constructive  suggestions and comments helped in improving the exposition. I  express my deep gratitude to Professor Yann Bugeaud for his valuable  suggestions and comments on an  earlier version of this article. I am also very grateful to Professor Pietro Corvaja for his encouragement and Prof. R. Thangadurai for carefully reading the manuscript. This research was supported by the research grant provided by the Department of Atomic Energy, Govt. of India.

\end{document}